\newtheorem{theorem}{Theorem}
\newtheorem{corollary}[theorem]{Corollary}
\newtheorem{proposition}[theorem]{Proposition}
\newtheorem{lemma}[theorem]{Lemma}
\theoremstyle{definition}
\newtheorem{example}[theorem]{Example}
\newcommand{\F}{\mathbb{F}}
\title{Generalised Howe curves of genus five attaining the Serre bound}
\author[M. Q. Kawakita]{Motoko Qiu Kawakita}
\address{Division of Mathematics, Shiga University of Medical Science,
 Seta Tsukinowa-cho, Otsu, Shiga, 520-2192 Japan}
\subjclass[2020]{Primary: 11G20, 14G05; Secondary: 14G50.}
\keywords{Serre bound, generalised Howe curves, Legendre elliptic curves}
\email{kawakita@belle.shiga-med.ac.jp}
\thanks{This research was partially supported by
JSPS Grant-in-Aid for Scientific Research (C) 23K03199.}
\begin{document}
\maketitle
\begin{abstract} 
We find that non-hyperelliptic generalised Howe curves and their twists
of genus $5$ attain the Hasse--Weil--Serre bound over some finite fields of order
$p$, $p^2$ or $p^3$ for a prime $p$. 
We are able to decompose their Jacobians completely under certain assumptions
and to determine the precise condition on the ﬁnite ﬁeld over which they attain the Hasse--Weil--Serre bound.
\end{abstract}

\section{Introduction}
Let $p$ be a prime, $k$ be a field of characteristic $p$
and $\F_{q}$ be a finite field with $q$ elements where $q$ is a power of $p$.
A curve $C$ is a projective, absolutely irreducible, non-singular algebraic curve defined over $k$. 
A curve $C$ over $\F_q$ 
is said to be {\em maximal} if the number of its rational points attains the Hasse--Weil upper bound
\begin{align*}
\#C(\F_q) \le q + 1 + 2g\sqrt{q} 
\end{align*}
where $g$ is the genus of $C$. 
In 1983, Serre provided a non-trivial improvement of the Hasse--Weil bound 
when $q$ is not a square root in \cite{s83}, namely
\begin{align*}
\#C(\F_q) \le q + 1 + g \lfloor 2\sqrt{q} \rfloor 
\end{align*}
where $\lfloor\cdot\rfloor$ is the floor function. 
We refer to this bound as the {\em Serre bound}. 

Curves attaining the Hasse--Weil or the Serre bound are interesting objects 
not only in their own right but also for their applications in coding theory. Indeed, 
Goppa described a way to use algebraic curves to construct linear error-correcting codes in \cite{goppa},
the so-called algebraic geometric codes; see \cite{sti}. 
The existence of curves with many rational points with respect to their genus
guarantee efficient error-correcting codes.
For this reason, maximal curves and curves attaining the Serre bound have been widely investigated in the last years,
see for instance \cite{ggs10, gt, gmz15}.

Howe investigated the non-singular projective model of the fibre product of two elliptic curves in \cite{how16},
which was called a Howe curve in subsequent works \cite{khh20, khs20}.
After that, he constructed curves of genus $5$, $6$ and $7$ 
by taking fibre products of curves of genus $1$ or $2$ in \cite{how17}.
He also implemented his constructions on a computer
and find a lot of curves of genus $4$, $5$, $6$ and $7$ with many rational points which updated the table \cite{ghlr}.
On the other hand, Richelot isogenies of Jacobians of superspecial 
curves of genus $1$ and $2$ are used in post-quantum cryptography.
Studying on decomposed Richelot isogenies of Jacobians of curves of higher genus 
is important for cryptography; see \cite{cs20, kt20, kt23} and the references there.
For this reason, Katsura and Takashima defined a generalised Howe curve in \cite{kt23}
by a natural extension of Howe's constructions in \cite{how16, how17}.
This formulation is very useful for searching curves with many rational points,
and actually stimulates this paper.
It leads us to discover new curves attaining the Serre bound. 

The paper is organised as follows.
Sections \ref{gh} and \ref{twistedE} are preparations for Section \ref{genus5}.
Section~\ref{gh} introduces and extends the notion of a generalised Howe curve from \cite{kt23}.
Section \ref{twistedE} provides the conditions for certain twisted Legendre elliptic curves 
to attain the Serre bound over $\F_p$, $\F_{p^2}$ or $\F_{p^3}$. 
Our main result is Theorem \ref{g5serre} in Section \ref{genus5},
which provides conditions for some twisted generalised Howe curves of genus $5$.
We implemented by Magma \cite{magma} and obtained explicit examples satisfying the conditions.
Obviously, they are new curves, and listed in Section~\ref{genus5}.

\section{A Generalised Howe curve} \label{gh}
We introduce the definition of a generalised Howe curve from \cite{kt23}.
Originally, it was defined over an algebraically closed field of characteristic $p>2$.
In this paper we extend it to a field $k$ of characteristic $p>2$. 
Let $C_1$, $C_2$ be the non-singular projective models of 
two hyperelliptic curves defined by
\begin{align*}
C_1 &\colon y_1^2=(x-a_1)(x-a_2)\ldots (x-a_r)(x-a_{r+1})\ldots (x-a_{2g_1+2}),\\
C_2 &\colon y_2^2=(x-a_1)(x-a_2)\ldots (x-a_r)(x-b_{r+1})\ldots (x-b_{2g_2+2})
\end{align*}
respectively, where $a_i, b_i \in k$, $a_i \neq b_j$ for any $i$ and $j$,
$a_i\neq a_j$ and $b_i \neq b_j$ for $i \neq j$, 
$0< g_1 \leq g_2$. 
Let $\psi_1 \colon C_1\longrightarrow {\bf P}^1$ and  $\psi_2 \colon C_2 \longrightarrow {\bf P}^1$ 
be the hyperelliptic structures. There are $r$ common branch points 
of $\psi_1$ and $\psi_2$.
Consider the fiber product $C_1 \times_{{\bf P}^1} C_2$:
\begin{align*}
\begin{array}{ccc}
   C_1 \times_{{\bf P}^1} C_2 & \stackrel{\pi_2}{\longrightarrow} & C_2 \\
     \pi_1\downarrow & & \downarrow  \psi_2\\
      C_1 & \stackrel{\psi_1}{\longrightarrow} & {\bf P}^1.
\end{array}
\end{align*}
Assume that there exists no isomorphism $\varphi\colon C_1 \longrightarrow C_2$
such that $\psi_2\circ \varphi = \psi_1$. Then, the curve $C_1 \times_{{\bf P}^1} C_2$
is irreducible.
Denote by $C$ the non-singular projective model of $C_1 \times_{{\bf P}^1} C_2$,
 which is called  a {\em generalised Howe curve} in \cite{kt23}.
Note that a curve $C$ is just a Howe curve when $g_1= g_2 = 1$ in \cite{how16}. 

\begin{proposition}[{\cite[Proposition 1]{kt23}}] \label{genus}
The genus $g$ of $C$ is equal to $2(g_1 + g_2) + 1 - r$.
\end{proposition}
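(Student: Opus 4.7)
The plan is to apply the Riemann--Hurwitz formula to the degree-$2$ projection $\pi_1 \colon C \longrightarrow C_1$. Because $\pi_1$ comes from base change of $\psi_2$ along $\psi_1$, a point $P_1 \in C_1$ has ramified preimage under $\pi_1$ precisely when $\psi_2$ is branched at $\psi_1(P_1)$ and the fiber product is locally smooth there. I would therefore sort the branch locus of $\psi_2$ into two parts: the common branch points $a_1,\dots,a_r$ (which are also branch points of $\psi_1$) and the non-common ones $b_{r+1},\dots,b_{2g_2+2}$.

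Over each non-common branch point $b_j$ ($r<j\le 2g_2+2$), the map $\psi_1$ is unramified, so $\psi_1^{-1}(b_j)$ consists of two smooth points of $C_1$, and above each of them $\pi_1$ ramifies (since $\psi_2$ has a single preimage at $b_j$ with multiplicity $2$). This contributes $2(2g_2+2-r)$ genuine ramification points of $\pi_1$ on $C$.

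The subtle case, and the one I expect to require the most care, is the common branch points. Locally at $x=a_i$ we may write $C_1\colon y_1^2=(x-a_i)u(x)$ and $C_2\colon y_2^2=(x-a_i)v(x)$ with $u(a_i),v(a_i)\neq 0$. On the singular fiber product, the function $w=y_1/y_2$ satisfies $w^2=u(x)/v(x)$, which is a unit at $x=a_i$, so $w$ takes two distinct nonzero values there. Hence the fiber product has an ordinary node at $(a_i,0,0)$ whose normalisation produces two distinct smooth points of $C$, each mapping to the unique point of $\psi_1^{-1}(a_i)\subset C_1$. In particular, $\pi_1$ is \emph{unramified} above these $r$ points, contributing nothing to the ramification divisor.

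Combining these in Riemann--Hurwitz gives
\begin{align*}
2g_C-2 \;=\; 2(2g_1-2) \;+\; 2\bigl(2g_2+2-r\bigr),
\end{align*}
which simplifies to $g_C=2(g_1+g_2)+1-r$, as asserted. A symmetric computation with $\pi_2\colon C\to C_2$ of course yields the same answer, which provides a useful sanity check.
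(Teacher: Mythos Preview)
Your argument is correct: the Riemann--Hurwitz count for the degree-$2$ cover $\pi_1\colon C\to C_1$ goes through exactly as you describe, with the crucial point being that over each common branch point $a_i$ the fibre product acquires an ordinary node whose two branches make $\pi_1$ unramified there, while over each $b_j$ you pick up two genuine (tame, since $p>2$) ramification points. Note, however, that the present paper does not actually prove this proposition; it is merely quoted from \cite[Proposition~1]{kt23}, so there is no in-paper argument to compare against. Your approach is the standard one and is almost certainly what \cite{kt23} does as well.
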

\begin{theorem}[{\cite[Theorem 1]{kt23}}] \label{hyper}
Assume $g\ge 4$.  Then the curve $C$ is hyperelliptic if and only if $r=g_1+g_2+1$.
\end{theorem}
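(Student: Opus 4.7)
The plan is to analyse the Klein four subgroup $V = \langle \sigma_1, \sigma_2\rangle \subset \Aut(C)$ generated by $\sigma_1 = (\iota_1, 1)$ and $\sigma_2 = (1, \iota_2)$, where $\iota_i$ is the hyperelliptic involution of $C_i$, and compare its three non-trivial elements with the hyperelliptic involution of $C$ when one exists. A preliminary step is to identify the three intermediate double covers: $C/\sigma_1 \cong C_2$ and $C/\sigma_2 \cong C_1$ follow by direct computation with the fibre-product coordinates, while the third quotient $C_3 := C/\sigma_3$ (with $\sigma_3 = \sigma_1 \sigma_2$) is the hyperelliptic curve obtained by setting $y_3 = y_1 y_2 / \prod_{i \leq r}(x-a_i)$, namely
\begin{align*}
y_3^2 = \prod_{i = r+1}^{2g_1+2}(x-a_i) \prod_{i = r+1}^{2g_2+2}(x-b_i).
\end{align*}
A branch-point count then gives $g(C_3) = g_1 + g_2 + 1 - r$.

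For the easy direction, if $r = g_1+g_2+1$ then $g(C_3) = 0$, so $\sigma_3$ is an involution with rational quotient, i.e.\ a hyperelliptic involution on $C$ (and indeed $g = g_1 + g_2 \geq 2$).

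For the converse, assume $C$ is hyperelliptic with $g \geq 4$ and let $\tau$ be the (unique) hyperelliptic involution. Since $g \geq 2$, $\tau$ is central in $\Aut(C)$ and hence commutes with $\sigma_1$ and $\sigma_2$. I would aim to show $\tau \in V$, which together with $g_1, g_2 \geq 1$ would force $\tau = \sigma_3$ and so $g(C_3)=0$, i.e.\ $r = g_1+g_2+1$. The strategy is by contradiction: if $\tau \notin V$ then $\tau \neq \sigma_i$ for each $i = 1, 2, 3$, so the degree-$2$ maps $\phi \colon C \to C/\tau \cong \mathbb{P}^1$ and $\pi_i \colon C \to C/\sigma_i$ have distinct deck involutions, and the Castelnuovo--Severi inequality applied to each pair yields
\begin{align*}
g \leq 2 \, g(C/\sigma_i) + 1.
\end{align*}
For $i = 1, 2$ this reads $r \geq 2 g_2$ and $r \geq 2 g_1$ after substituting $g = 2(g_1+g_2)+1-r$, while $i = 3$ gives $r \leq 2$ after substituting $g(C_3) = g_1+g_2+1-r$. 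With $g_1, g_2 \geq 1$ the only possibility is $g_1 = g_2 = 1$ and $r = 2$, producing $g = 3$ and contradicting $g \geq 4$.

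The single delicate point is the application of Castelnuovo--Severi: for two degree-$2$ maps the required hypothesis (no common intermediate factorisation) is equivalent to the two deck involutions being distinct, which is precisely what the contradiction assumption $\tau \notin V$ provides. The genus bound $g \geq 4$ is used sharply, only to exclude the small case $(g_1, g_2, r) = (1, 1, 2)$ of genus $3$ --- a classical Howe setting --- where the three inequalities become tight and no contradiction is reached.
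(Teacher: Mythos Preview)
Your argument is correct. The identification of the three intermediate quotients $C/\sigma_1\cong C_2$, $C/\sigma_2\cong C_1$, $C/\sigma_3\cong C_3$ and the genus count $g(C_3)=g_1+g_2+1-r$ match the paper's Theorem~\ref{jacghc}, and the Castelnuovo--Severi inequality applied to each pair of double covers $(\phi,\pi_i)$ cleanly forces $(g_1,g_2,r)=(1,1,2)$ when $\tau\notin V$, which the hypothesis $g\ge 4$ rules out. (A minor cosmetic point: your clause ``for $i=1,2$ this reads $r\ge 2g_2$ and $r\ge 2g_1$'' has the labels swapped relative to your conventions, since $C/\sigma_1\cong C_2$ gives $g\le 2g_2+1$ and hence $r\ge 2g_1$; the two inequalities are both present, so the conclusion is unaffected.)

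Note, however, that the paper does not supply its own proof of Theorem~\ref{hyper}: the statement is quoted verbatim from \cite[Theorem~1]{kt23} and used as a black box, so there is no in-paper argument to compare your proposal against. Your Castelnuovo--Severi approach is the natural one for results of this shape.
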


Originally the Jacobian of a generalised Howe curve
was decomposed over an algebraically closed field in \cite{kt23}.
Now, we should extend it to a field $k$.
The next theorem by Kani and Rosen is a powerful tool for us to solve this problem.
From now on, we denote by $J(C)$ the Jacobian of $C$.
\begin{theorem}[{\cite[Theorem B]{kr89}}] \label{kanirosen}
Let $C$ be a curve over $k$ and $G$ a finite subgroup of
the automorphism group $\operatorname{Aut}(C)$ such that
$G=H_1\cup \cdots \cup H_n$, where the $H_i$'s are subgroups 
of $G$ such that $H_i\cap H_j=\{1_G\}$ for $i\neq j$.
Then we have the isogeny relation
\begin{align*}
J(C)^{n-1}\times J(C/G)^g \sim J(C/H_1)^{h_1}\times \cdots \times J(C/H_n)^{h_n}
\end{align*}
where $g=|G|$ and $h_i=|H_i|$.
\end{theorem}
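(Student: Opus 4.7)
The natural framework is the isogeny category of abelian varieties with $G$-action, where $\mathbb{Q}[G]$ acts on $J(C)\otimes\mathbb{Q}$. My plan has three steps. First, for each subgroup $H\le G$ I would invoke the standard fact that the pullback $\pi_H^*\colon J(C/H)\to J(C)$ has finite kernel and that, in the isogeny category, its image is $e_H\cdot J(C)$, where
\[
e_H \;:=\; \frac{1}{|H|}\sum_{h\in H} h \;\in\; \mathbb{Q}[G]
\]
is the averaging idempotent. Hence $J(C/H)\sim e_H\cdot J(C)$; specialising to $H=\{1_G\}$ and $H=G$ recovers $J(C)$ and $J(C/G)$ respectively.

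Second, I would verify the key identity in $\mathbb{Q}[G]$:
\[
(n-1)\cdot 1_G \;+\; g\cdot e_G \;=\; \sum_{i=1}^n h_i\cdot e_{H_i}.
\]
Expanding both sides in the $\mathbb{Q}$-basis indexed by $G$, the left side equals $n\cdot 1_G + \sum_{x\neq 1_G} x$, while the right side equals $\sum_i\sum_{x\in H_i} x = n\cdot 1_G + \sum_{x\neq 1_G}\#\{i:x\in H_i\}\,x$. The hypothesis $G=H_1\cup\cdots\cup H_n$ together with $H_i\cap H_j=\{1_G\}$ is precisely what forces $\#\{i:x\in H_i\}=1$ for every $x\neq 1_G$, so the two expansions coincide.

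Third, I would translate this algebraic identity into the claimed isogeny via the general principle (essentially Kani and Rosen's Theorem A): a $\mathbb{Q}[G]$-identity $\sum m_j e_{K_j}=\sum m'_k e_{K'_k}$ with non-negative integer coefficients and subgroup idempotents yields an isogeny $\prod (e_{K_j}J(C))^{m_j}\sim\prod (e_{K'_k}J(C))^{m'_k}$. Combined with $e_{H_i}J(C)\sim J(C/H_i)$ from the first step, this produces exactly
\[
J(C)^{n-1}\times J(C/G)^{g}\;\sim\;J(C/H_1)^{h_1}\times\cdots\times J(C/H_n)^{h_n}.
\]

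The main obstacle is this last step: turning a rational identity of \emph{non-orthogonal} idempotents into an honest isogeny of abelian varieties. I would handle this by decomposing $\mathbb{Q}[G]$ via Wedderburn into simple factors, checking the identity on each simple component, and comparing the multiplicities with which each simple $\mathbb{Q}[G]$-module appears in the first $\ell$-adic cohomology of $J(C)$; Poincar\'e reducibility then realises the resulting numerical equality of multiplicities as a genuine isogeny, with denominators cleared by multiplying through by $|G|$.
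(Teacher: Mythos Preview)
The paper does not prove this statement; it is quoted from Kani--Rosen \cite{kr89} and used as a black box, so there is no in-paper argument to compare against. Your sketch is precisely the original Kani--Rosen proof: the identity in step two is exactly the idempotent relation underlying their Theorem~B, and the mechanism in your third step---converting a $\mathbb{Q}[G]$-relation among subgroup idempotents into an isogeny via Poincar\'e reducibility and a comparison of isotypic multiplicities---is their Theorem~A. The proposal is correct.
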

\begin{theorem}\label{jacghc}
With notation as above, a generalised Howe curve $C$ 
decomposes over $k$ as follows\textup{:}
\begin{align*}
J(C) \sim J(C_1) \times J(C_2) \times J(C_3)
\end{align*}
where $C_3$ is defined by
\begin{align*}
C_3 \colon y_3^2=(x-a_{r+1})\ldots(x-a_{2g_1+2})(x-b_{r+1})\ldots (x-b_{2g_1+2}).
\end{align*}
\end{theorem}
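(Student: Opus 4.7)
The plan is to apply the Kani--Rosen theorem (Theorem~\ref{kanirosen}) to a Klein four-group of involutions on $C$ built from the two hyperelliptic involutions, and to read off the three relevant quotient curves by direct function-field computations.

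First I would introduce the two hyperelliptic involutions. Let $\sigma_1$ be the involution of $C$ induced by $(x,y_1,y_2)\mapsto (x,-y_1,y_2)$, and $\sigma_2$ the one induced by $(x,y_1,y_2)\mapsto (x,y_1,-y_2)$. Since the $a_i,b_j$ lie in $k$, both involutions are defined over $k$. They commute, so together with $\sigma_3:=\sigma_1\sigma_2$ and the identity they generate a subgroup $G\subset\Aut(C)$ isomorphic to $(\mathbb{Z}/2\mathbb{Z})^2$. Setting $H_i=\langle\sigma_i\rangle$ for $i=1,2,3$, one has $G=H_1\cup H_2\cup H_3$ with pairwise trivial intersections, which is exactly the combinatorial hypothesis needed for Theorem~\ref{kanirosen}.

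Next I would identify the quotients. The cover $C\to C_1\times_{\mathbf{P}^1}C_2\to C_1$ shows that $C/\sigma_2\cong C_1$ and symmetrically $C/\sigma_1\cong C_2$. The quotient $C/G$ is $\mathbf{P}^1$ because the composition $\psi_1\circ\pi_1=\psi_2\circ\pi_2$ is $G$-Galois. The interesting case is $C/\sigma_3$: the function $y_1y_2$ is $\sigma_3$-invariant, and setting
\begin{equation*}
y_3 \;=\; \frac{y_1 y_2}{(x-a_1)(x-a_2)\cdots(x-a_r)}
\end{equation*}
one computes
\begin{equation*}
y_3^2 \;=\; (x-a_{r+1})\cdots(x-a_{2g_1+2})(x-b_{r+1})\cdots(x-b_{2g_2+2}),
\end{equation*}
so the subfield of $k(C)$ fixed by $\sigma_3$ is $k(x,y_3)$, which is the function field of $C_3$. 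A genus computation then shows that $g(C_3)=g_1+g_2+1-r$, which by Proposition~\ref{genus} equals $g-g_1-g_2$, confirming that the three quotient Jacobians have the right total dimension.

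With the quotients identified, Theorem~\ref{kanirosen} applied with $n=3$, $|G|=4$, and each $|H_i|=2$ yields
\begin{equation*}
J(C)^{2}\times J(\mathbf{P}^1)^{4}\;\sim\;J(C_2)^{2}\times J(C_1)^{2}\times J(C_3)^{2},
\end{equation*}
and since $J(\mathbf{P}^1)=0$ this gives $J(C)^2\sim (J(C_1)\times J(C_2)\times J(C_3))^2$; Poincar\'e complete reducibility (with the matching of dimensions) then delivers $J(C)\sim J(C_1)\times J(C_2)\times J(C_3)$ over $k$. The main obstacle I anticipate is the verification step for $C/\sigma_3$: one must produce an explicit invariant generator and check that it yields precisely the model $C_3$ stated in the theorem, rather than a twist or a quadratic cover of it, and confirm that this identification is valid over $k$ and not only over $\bar k$. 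The rest is bookkeeping: checking the Kani--Rosen hypotheses and passing from the squared isogeny relation to the unsquared one via dimension count and Poincar\'e's theorem.
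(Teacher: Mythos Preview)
Your proposal is correct and follows essentially the same approach as the paper: introduce the Klein four-group generated by the two hyperelliptic involutions, identify the three intermediate quotients with $C_1$, $C_2$, $C_3$, and apply Kani--Rosen. Your write-up is in fact more detailed than the paper's, which simply asserts the quotient identifications and the passage from the squared isogeny to the unsquared one without explicitly invoking Poincar\'e reducibility or the genus check.
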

\begin{proof}
Considering two automorphisms of the curve $C$:
\begin{align*}
\sigma&\colon (x, y_1, y_2) \mapsto (x, -y_1, y_2),\\
\tau&\colon (x, y_1, y_2) \mapsto  (x, y_1, -y_2).
\end{align*}
Then, the quotients $C\slash \langle\sigma \rangle$, 
 $C\slash \langle\tau\rangle$ and
 $C\slash \langle\sigma\tau\rangle$ 
are birational to curves $C_2$, $C_1$ and $C_3$ respectively.
Applying Theorem~\ref{kanirosen} to the subgroup of the automorphism group of $C$ generated by $\sigma$ and
$\tau$, we have the isogeny
\begin{align*}
J(C)^2\times J(C/\langle \sigma, \tau \rangle )^4 \sim J(C/\langle \sigma \rangle)^2\times J(C/\langle \tau \rangle)^2 \times J(C/\langle \sigma\tau\rangle)^2,
\end{align*}
which means that the Jacobian of $C$ decomposes as $ J(C) \sim J(C_1) \times J(C_2) \times J(C_3).$
\end{proof}
\begin{corollary}\label{nrp}
If $k=\F_q$ then the number of rational points of $C$ over $\F_q$ 
\begin{align*}
 \#C(\F_q)=\#C_1(\F_q)+\#C_2(\F_q)+\#C_3(\F_q)-2q-2.
\end{align*}
\end{corollary}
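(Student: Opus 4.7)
The plan is to reduce the point count to a statement about Frobenius traces, which are additive under isogenies of abelian varieties defined over $\F_q$. For any smooth projective curve $X$ over $\F_q$, the Lefschetz trace formula (or equivalently the functional equation for the zeta function) gives
\begin{align*}
\#X(\F_q) = q + 1 - a_X,
\end{align*}
where $a_X$ is the trace of the $q$-power Frobenius on $T_\ell J(X)$ for any $\ell \neq p$. So the corollary is equivalent to the identity $a_C = a_{C_1} + a_{C_2} + a_{C_3}$, since then substituting $a_{C_i} = q+1 - \#C_i(\F_q)$ and $a_C = q+1 - \#C(\F_q)$ and rearranging gives $\#C(\F_q) = \#C_1(\F_q) + \#C_2(\F_q) + \#C_3(\F_q) - 2(q+1)$.

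To obtain that trace identity, I would invoke Theorem~\ref{jacghc}, which provides an isogeny $J(C) \sim J(C_1) \times J(C_2) \times J(C_3)$. Isogenous abelian varieties over $\F_q$ share the same characteristic polynomial of Frobenius; in particular their traces agree, and the trace of Frobenius on a product is the sum of the traces on the factors. This gives $a_C = a_{C_1} + a_{C_2} + a_{C_3}$ immediately, and then plugging into the display above finishes the argument.

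The only step requiring care, and thus the main obstacle, is making sure the isogeny of Theorem~\ref{jacghc} is defined over $\F_q$ rather than only over the algebraic closure; Frobenius-trace additivity fails for isogenies defined only over an extension. Fortunately this is built into the proof of Theorem~\ref{jacghc}: the automorphisms $\sigma$ and $\tau$ are given by the $\F_q$-rational formulas $(x,y_1,y_2) \mapsto (x,-y_1,y_2)$ and $(x,y_1,y_2) \mapsto (x,y_1,-y_2)$, the three quotient curves $C_1, C_2, C_3$ and their defining equations are $\F_q$-rational, and Kani--Rosen produces an isogeny over the field of definition. Once this observation is recorded, the rest of the proof is a one-line substitution.
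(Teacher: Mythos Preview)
Your argument is correct and matches the paper's proof essentially line for line: both invoke the isogeny $J(C)\sim J(C_1)\times J(C_2)\times J(C_3)$ from Theorem~\ref{jacghc}, pass to Frobenius traces on the Tate modules, use additivity to get $t=t_1+t_2+t_3$, and substitute $t_i=q+1-\#C_i(\F_q)$. Your extra remark about the isogeny being defined over $\F_q$ is a welcome clarification but does not change the route.
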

\begin{proof} It is well known that $\#C(\F_q)=q+1-t$, where $t$ is the trace of the Frobenius endomorphism
acting on a Tate module of $J(C)$.
Since $J(C) \sim J(C_1) \times J(C_2) \times J(C_3)$,
then the Tate module of $J(C)$ is isomorphic to the direct sum of the Tate modules of $J(C_1)$, $J(C_2)$ and $J(C_3)$.
Hence $t=t_1+t_2+t_3$, where $t_1$, $t_2$ and $t_3$ are the traces of the Frobenius on the Tate modules of $J(C_1)$, $J(C_2)$ and $J(C_3)$ respectively.
The result follows by recalling that $t_i=q+1-\#C_i(\F_q)$ for $1 \le i \le 3$.
\end{proof}

\section{Twisted Legendre elliptic curves over finite fields}
\label{twistedE}
Throughout this section, let 
$\theta \in \F_p\backslash \{0\}$ and $\lambda \in \F_p\backslash \{0, 1\}$, and
a twisted Legendre elliptic curve is defined by 
\begin{align*} 
 E_\lambda^{(\theta)}\colon  y^2 = \theta x(x-1)(x-\lambda).
\end{align*}
Let $p \ge 3$ and $m=(p-1)/2$. We define a polynomial
\begin{align*} 
H_p(t)=\sum_{i=0}^m \binom{m}{i}^2 t^i
\end{align*}
as in \cite[V.4, Theorem 4.1]{s09}.
\begin{theorem}  \label{elserre}
\begin{enumerate}
\item[(i)]
Let $p \ge 17$. 
A curve $E_\lambda^{(\theta)}$ over $\F_p$ attains the Serre bound if and only if
\begin{align*}
(-\theta)^mH_p(\lambda )\equiv - \lfloor 2 \sqrt{p}\rfloor \mod p. 
\end{align*}
\item[(ii)]
A curve $E_\lambda^{(\theta)}$ over $\F_{p^2}$ is maximal if and only if
\begin{align*}
H_p(\lambda) \equiv 0 \mod p.
\end{align*}
Further, if $E_\lambda^{(\theta)}$ over $\F_{p^2}$ is maximal then 
\begin{align*}
p \equiv 3 \mod 4. 
\end{align*}
\item[(iii)]
Let $p \ge 11$. 
 Set $h$ as the integer such that
$h \equiv (-\theta)^m H_p(\lambda)$ mod $p$ and $0 \leqq h <p$.
Then a curve $E_\lambda^{(\theta)}$ over $\F_{p^3}$ attains the Serre bound
if and only if
\begin{align*}
h^3-3p  h=- \lfloor 2p \sqrt{p} \rfloor.
\end{align*}
\end{enumerate}
\end{theorem}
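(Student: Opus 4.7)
The plan is to treat all three parts by first establishing a single congruence
\[
a_p(E_\lambda^{(\theta)}) \equiv (-\theta)^m H_p(\lambda) \pmod p.
\]
This combines the quadratic-twist formula $a_p(E_\lambda^{(\theta)}) = \left(\frac{\theta}{p}\right) a_p(E_\lambda)$, Euler's criterion $\left(\frac{\theta}{p}\right) \equiv \theta^m \pmod p$, and the classical Hasse-invariant congruence $a_p(E_\lambda) \equiv (-1)^m H_p(\lambda) \pmod p$ for the Legendre curve, obtained by extracting the coefficient of $x^{p-1}$ in $(x(x-1)(x-\lambda))^m$ and combining with the character-sum expression for $\#E_\lambda(\F_p)$ (this refines \cite[V.4, Theorem 4.1]{s09}). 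Write $a := a_p(E_\lambda^{(\theta)})$; Newton's identities applied to the Frobenius eigenvalues (sum $a$, product $p$) give $a_{p^2} = a^2 - 2p$ and $a_{p^3} = a^3 - 3pa$.

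For part (i), Serre-maximality over $\F_p$ means $a = -\lfloor 2\sqrt p\rfloor$. For $p \ge 17$ the inequality $4\sqrt p < p$ makes the reduction $[-2\sqrt p, 2\sqrt p] \cap \mathbb{Z} \to \mathbb{Z}/p\mathbb{Z}$ injective, and $-\lfloor 2\sqrt p\rfloor$ equals its own residue in $(-p, 0]$; the equivalence with the stated congruence follows immediately from the key congruence.

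For part (ii), maximality over $\F_{p^2}$ requires $a_{p^2} = -2p$, i.e.\ $a = 0$; since $|a| \le 2\sqrt p < p$ this amounts to $a \equiv 0 \pmod p$, equivalent (since $\theta \not\equiv 0$) to $H_p(\lambda) \equiv 0 \pmod p$. For the ``$p \equiv 3 \pmod 4$'' supplement, the three roots $x = 0, 1, \lambda$ of the defining cubic are $\F_p$-rational, giving $E_\lambda^{(\theta)}(\F_p)$ a $2$-torsion subgroup of order $4$; maximality forces $a = 0$, hence $\#E_\lambda^{(\theta)}(\F_p) = p+1$, so $4 \mid p+1$.

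For part (iii), set $g(x) := x^3 - 3px$, so Serre-maximality over $\F_{p^3}$ is $g(a) = -\lfloor 2p\sqrt p\rfloor$; from $a \equiv h \pmod p$ and $|a| \le 2\sqrt p < p$, either $a = h$ or $a = h - p$. The factorisation $g(x) + 2p\sqrt p = (x - \sqrt p)^2(x + 2\sqrt p)$ confines real solutions of $g(x) = -\lfloor 2p\sqrt p\rfloor$ to a neighbourhood of $\sqrt p$ (two roots) or of $-2\sqrt p$ (one root). On the negative side, $g$ is increasing on $[-2\sqrt p, -\sqrt{3p}\,]$ (since $g'(x) > 0$ for $|x| > \sqrt p$), and a short estimate using $\{2\sqrt p\} = (4p - n^2)/(2\sqrt p + n) \ge 1/(4\sqrt p)$ with $n := \lfloor 2\sqrt p\rfloor$ and $n^2 \ge (2\sqrt p - 1)^2$ yields
\[
g(-n) + 2p\sqrt p \ge \frac{9p - 12\sqrt p + 3}{4\sqrt p} > 1 \quad \text{for } p \ge 11,
\]
so $g(-n) > -\lfloor 2p\sqrt p\rfloor$ and monotonicity propagates the inequality to every negative integer $a$ in the Hasse interval. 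Hence Serre-maximality forces $a \ge 0$, so $h = a$ and $g(h) = g(a) = -\lfloor 2p\sqrt p\rfloor$. Conversely, $g(h) = -\lfloor 2p\sqrt p\rfloor$ forces $h \in (0, \sqrt{3p})$; for $p \ge 17$ the clean inequality $\sqrt{3p} < p - 2\sqrt p$ excludes $a = h - p$, while the transitional cases $p = 11, 13$ are handled by inspecting the single ambiguous integer ($h = 5$ at $p = 11$ with $g(5) = -40$, and $h = 6$ at $p = 13$ with $g(6) = -18$), neither matching $-\lfloor 2p\sqrt p\rfloor$. The main obstacle is exactly this calculus-plus-case-analysis in (iii); the estimate above is what forces the hypothesis $p \ge 11$.
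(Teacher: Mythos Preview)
Your argument is correct and essentially self-contained, whereas the paper's proof proceeds almost entirely by citation. Both begin with the same key congruence $a_p(E_\lambda^{(\theta)})\equiv(-\theta)^mH_p(\lambda)\pmod p$; the paper obtains it by extracting the coefficient of $x^{p-1}$ from $(\theta x(x-1)(x-\lambda))^m$ directly, while you factor it as twist-relation plus Euler plus the classical untwisted computation---these are equivalent. From there the paper simply invokes \cite[Theorem~2]{k15} for (i), \cite[V.4, Theorem~4.1]{s09} together with \cite[Proposition~3.2(1)]{at02} for (ii), and \cite[Theorem~4]{k15} for (iii). Your parts (i) and (iii) therefore amount to reproving those results of \cite{k15} from scratch: the injectivity of reduction on the Hasse interval for $p\ge 17$, and the calculus analysis of $g(x)=x^3-3px$ together with the small-prime checks for $p=11,13$. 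For the supplement in (ii), your $2$-torsion argument (the three $\F_p$-rational Weierstrass points force $4\mid \#E(\F_p)=p+1$) is more direct and elementary than the paper's appeal to \cite{at02}, which records the stronger two-sided statement that $H_p$ has a root in $\F_p$ if and only if $p\equiv 3\pmod 4$. One small quibble in (iii): from your own ingredients $(n+\sqrt p)^2>(3\sqrt p-1)^2$ and $\{2\sqrt p\}>1/(4\sqrt p)$ one gets the lower bound $(9p-6\sqrt p+1)/(4\sqrt p)$ rather than $(9p-12\sqrt p+3)/(4\sqrt p)$; either expression exceeds $1$ for $p\ge 11$, so the conclusion is unaffected.
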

\begin{proof}
We need the coefficient of $x^{p-1}$ in the expression $(\theta x(x-1)(x-\lambda))^m$.
It is the same as the coefficient of $x^m$ in $\theta^m(x-1)^m(x-\lambda)^m$,
which is equal to
\begin{align*}
\theta^m\sum_{i=0}^m \binom{m}{i}(-\lambda)^i\binom{m}{m-i}(-1)^{m-i}
=(-\theta)^m H_p(\lambda).
\end{align*}
\begin{enumerate}
\item[(i)]
Since the coefficient of $x^{p-1}$ in $(\theta x(x-1)(x-\lambda))^m$
is $(-\theta)^m H_p(\lambda)$,
it follows from \cite[Theorem 2]{k15} that a curve 
$E_\lambda^{(\theta)}$ over $\F_p$ attains the Serre bound if and only if
\begin{align*}
(-\theta)^m H_p(\lambda) \equiv -\lfloor 2 \sqrt{p} \rfloor \mod p.
\end{align*}
\item[(ii)]
Similarly to the proof of \cite[V.4, Theorem 4.1]{s09},
we have that a curve $E_\lambda^{(\theta)}$ over $\F_{p^2}$ is maximal if and only if
\begin{align*}
(-\theta)^m H_p(\lambda) \equiv 0 \mod p.
\end{align*}
Since $\theta \neq 0$, the first claim follows.
On the other hand, Proposition 3.2 (1) of \cite{at02} says that
the set $\{\lambda \in \F_p | H_p(\lambda)=0\}$ is empty if and only if $p \equiv 1$ mod $4$.
Hence, only when $p \equiv 3$ mod $4$, 
$E_\lambda^{(\theta)}$ can be maximal over $\F_{p^2}$.
\item[(iii)] From \cite[Theorem 4]{k15}, we obtain it immediately.
\end{enumerate}
\end{proof}

\begin{lemma} \label{mod4}
 The number of rational points of $E_{\lambda}^{(\theta)}$ over $\F_q$
\begin{align*}
\#E_{\lambda}^{(\theta)}(\F_q) \equiv 0 \mod 4.
\end{align*}
\end{lemma}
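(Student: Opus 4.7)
The plan is to exhibit a subgroup of order $4$ inside $E_\lambda^{(\theta)}(\F_q)$ and then invoke Lagrange's theorem. The curve $E_\lambda^{(\theta)}\colon y^2=\theta x(x-1)(x-\lambda)$ has its full $2$-torsion cut out by $y=0$, namely the three affine points $(0,0)$, $(1,0)$, $(\lambda,0)$, together with the point at infinity. Since $\theta\in\F_p\setminus\{0\}$ and $\lambda\in\F_p\setminus\{0,1\}$, the three roots of the cubic lie in $\F_p\subseteq\F_q$ and are pairwise distinct, so all four $2$-torsion points are $\F_q$-rational.

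Consequently $E_\lambda^{(\theta)}[2]\cong(\Z/2\Z)^2$ is contained in $E_\lambda^{(\theta)}(\F_q)$ (recall that for an elliptic curve in Weierstrass form all three nontrivial $2$-torsion points together with $\infty$ form a subgroup). Lagrange's theorem then immediately gives $4\mid\#E_\lambda^{(\theta)}(\F_q)$.

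The main (and essentially only) thing to check carefully is that the three affine $2$-torsion points are genuinely distinct and smooth, i.e.\ that the defining cubic $\theta x(x-1)(x-\lambda)$ has no repeated roots; this is guaranteed by the standing hypotheses on $\lambda$ and $\theta$. No deep input such as the Hasse bound, Honda--Tate, or the computation of $H_p(\lambda)$ from Theorem~\ref{elserre} is required, and the proof is completely independent of whether $q$ is $p$, $p^2$, $p^3$, or any other power of $p$.
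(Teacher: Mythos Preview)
Your proof is correct and considerably more direct than the paper's. The paper proceeds in two stages: first it quotes a result of Katz to obtain $\#E_\lambda^{(1)}(\F_p)\equiv 0\pmod 4$, handles nonsquare $\theta$ via the quadratic-twist identity $\#E_\lambda^{(1)}(\F_p)+\#E_\lambda^{(\theta)}(\F_p)=2p+2$ (noting that $2p+2\equiv 0\pmod 4$ since $p$ is odd), and then lifts from $\F_p$ to $\F_{p^j}$ by an induction on $j$ using the trace recursion $a_j=a_1a_{j-1}-pa_{j-2}$ coming from the zeta function. Your argument bypasses all of this by observing that the cubic $\theta x(x-1)(x-\lambda)$ already splits over $\F_p$, so the full $2$-torsion $(\mathbb{Z}/2\mathbb{Z})^2$ sits inside $E_\lambda^{(\theta)}(\F_q)$ for every $q$, and Lagrange's theorem finishes at once. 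Your route is self-contained, uniform in $q$, and avoids external citations; the paper's route has the minor side benefit of making the twist relation and zeta-function recursion explicit, but for this particular lemma your approach is the cleaner one.
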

\begin{proof}
When $\theta=1$, 
$\#E_{\lambda}^{(1)}(\F_p) \equiv 0$ mod $4$ from \cite[Section 1]{katz10}.
When $\theta \notin {\F_p^*}^2$,
$\#E_{\lambda}^{(1)}(\F_p)+ \#E_{\lambda}^{(\theta)}(\F_p) =2p+2$ from \cite[Section 2]{at02}.
Hence we have 
$\#E_{\lambda}^{(\theta)}(\F_p) \equiv 0$ mod $4$.

Next, set $n_i=\#E_\lambda^{(\theta)}(\F_{p^i})$, 
$a_1=p+1-n_1$, $a_2=a_1^2-2p$ and $a_j=a_1a_{j-1}-pa_{j-2}$ for $j \ge 3$.
Applying the theory of Zeta function, we have $n_j=p^j+1-a_j$.
Assume $n_1 \equiv 0$ mod $4$.  Then, we have $a_j \equiv p^j +1$ mod $4$ by induction.
Therefore $n_j \equiv 0$ mod $4$. 
\end{proof}

\section{Twisted Generalised Howe curves of genus five} \label{genus5}
For our purpose to discover curves attaining the Serre bound,
we deal with the next type of twisted generalised Howe curves over a field $k$. 
Throughout this section, we set $C$ as the fibre product $C_1 \times_{{\bf P}^1} C_2$,
where $C_1$ and $C_2$ are curves of genus $2$ defined as follows:
\begin{align*}
C_1 &\colon y_1^2 = \alpha_1(x-a_1)(x-a_2)(x-a_3)(x-a_4)(x-a_5)(x-a_6),\\
C_2 &\colon y_2^2 = \alpha_2(x-a_1)(x-a_2)(x-a_3)(x-a_4)(x-b_5)(x-b_6)
\end{align*}
with $\alpha_1, \alpha_2 \in k\backslash\{0\} $, $a_i, b_i \in k$, 
where $a_1, \ldots, a_6$ and $b_5, b_6$ are all different.
Clearly, if $\alpha_1, \alpha_2 \in {k^*}^2$
then $C$ is birational to a generalised Howe curve.
From Proposition \ref{genus} and Theorem \ref{hyper},
it is a non-hyperelliptic curve of genus $5$.
Besides, by extending Theorem \ref{jacghc} to a twisted generalised Howe curve,  
we have the Jacobian decomposition $ J(C) \sim J(C_1) \times J(C_2) \times J(C_3)$, 
where $C_3$ is defined as follows:
\begin{align*}
C_3\colon y_3^2 = \alpha_1\alpha_2(x-a_5)(x-a_6)(x-b_5)(x-b_6).
\end{align*}

To decompose Jacobians of curves $C_1$ and $C_2$,
we extend Theorem 2 (b) of \cite{ikt24} from a finite field $\F_q$ to a field $k$ as the next theorem.
Because the proofs are similar, we omit it here. 
\begin{theorem} 
 \label{JacHyper} 
Let a curve of genus $2$ be defined by
\begin{align*}
D\colon y^2=\alpha(x-a_1)(x-a_2)(x-a_3)(x-a_4)(x-a_5)(x-a_6)
\end{align*}
with $\alpha \in k\backslash\{0\}$, $a_i \in k$,
$a_i \neq a_j$ when $i \neq j$ and
$(a_2-a_4)(a_1-a_6)(a_3-a_5)=(a_2-a_6)(a_1-a_5)(a_3-a_4).$
Set $\lambda=\dfrac{(a_1-a_3)(a_2-a_4)}{(a_2-a_3)(a_1-a_4)}$,
$\mu=\dfrac{(a_1-a_3)(a_2-a_5)}{(a_2-a_3)(a_1-a_5)}$
and $\theta=\alpha\cdot(a_2-a_3)(a_1-a_4)(a_1-a_5)(a_1-a_6).$
Assume that there exists a square root of $\lambda(\lambda-\mu)$ in $k^*$.

Then the Jacobian of the curve $D$ decomposes over $k$ as 
\begin{align*}
J(D) \sim E_+ \times E_-, 
\end{align*}
where we have the following defining equations\textup{:}
\begin{align*}
 s^2=\dfrac{\theta(1-\mu)}{1-\lambda}t(t-1)
\Big(t-\dfrac{(1-\lambda) \big(\mu-2\lambda\pm2(\lambda^2-\lambda\mu)^{1/2}\big)}{ \mu-1}\Big)\\
\end{align*}
for $E_+$ and $E_-$ respectively.
\end{theorem}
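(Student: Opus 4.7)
The strategy is to view $D$ as a genus-$2$ hyperelliptic curve carrying an extra involution arising from the cross-ratio constraint, apply Theorem \ref{kanirosen} to the Klein four-group generated by this involution and the hyperelliptic involution, and then realise the two non-hyperelliptic quotients as the asserted elliptic curves $E_\pm$.

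First, I would apply a Möbius transformation on $x$ sending $(a_1, a_2, a_3, a_4) \mapsto (\infty, 0, 1, \lambda)$, under which $a_5 \mapsto \mu$ and $a_6$ maps to some $\nu \in k$ expressible in $\lambda,\mu$. The hypothesis $(a_2-a_4)(a_1-a_6)(a_3-a_5)=(a_2-a_6)(a_1-a_5)(a_3-a_4)$ should translate, after this normalisation, into a clean bilinear relation among $\lambda,\mu,\nu$ that is exactly the condition for some fixed-point-free involution $\rho_0$ of $\mathbb{P}^1$ to permute the six branch points in three pairs (one of which is $\{\mu,\nu\}$). Since $\rho_0$ preserves the branch locus, it lifts to an involution $\rho$ of $D$ of order $2$, distinct from and commuting with the hyperelliptic involution $\iota$.

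Next, the group $G = \langle \iota, \rho \rangle \cong (\mathbb{Z}/2\mathbb{Z})^2$ has three order-$2$ subgroups $\langle \iota \rangle$, $\langle \rho \rangle$, $\langle \iota\rho \rangle$ whose union is $G$ and which pairwise intersect trivially. The quotient $D/\langle\iota\rangle = \mathbb{P}^1$ contributes trivially, so Theorem \ref{kanirosen} reduces to $J(D) \sim J(D/\langle\rho\rangle) \times J(D/\langle\iota\rho\rangle)$, and both factors are elliptic by Riemann--Hurwitz applied to the double covers $D \to D/\langle\rho\rangle$, $D \to D/\langle\iota\rho\rangle$. To obtain their defining equations, I would choose $\rho$-invariant and $\iota\rho$-invariant coordinates $t, s$ on $D$ and push the relation $y^2 = \alpha\prod(x-a_i)$ down through the quotient. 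The hypothesis that $\lambda(\lambda-\mu)$ is a square in $k^*$ is exactly what ensures that the lift $\rho$ itself (not merely $\rho_0$) is defined over $k$, so that both quotients live over $k$ individually; otherwise $E_+$ and $E_-$ form a Galois-conjugate pair over $k(\sqrt{\lambda(\lambda-\mu)})$.

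The main obstacle is the final explicit computation: verifying that the leading coefficient simplifies to $\theta(1-\mu)/(1-\lambda)$ with the packaged constant $\theta = \alpha(a_2-a_3)(a_1-a_4)(a_1-a_5)(a_1-a_6)$, and that the non-trivial branch point becomes $(1-\lambda)(\mu - 2\lambda \pm 2(\lambda^2 - \lambda\mu)^{1/2})/(\mu - 1)$. The two choices of sign correspond to the two quotients, and the square root appears precisely because $\rho$ and $\iota\rho$ are interchanged by the Galois action on $\sqrt{\lambda(\lambda-\mu)}$. This is the routine-but-tedious algebraic bookkeeping that the author defers to the parallel argument in \cite{ikt24}; a useful sanity check would be to specialise $\alpha = 1$ and a symmetric configuration of the $a_i$ so that $E_+ \cong E_-$ forces both radicands to collapse to the same value.
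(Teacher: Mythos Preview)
Your outline is essentially the intended one: the paper gives no proof here at all, simply remarking that the argument carries over verbatim from \cite{ikt24}, so the extra-involution-plus-Theorem~\ref{kanirosen} strategy you describe is exactly the route being invoked.

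Two small corrections are worth flagging. After your normalisation $(a_1,a_2,a_3,a_4,a_5,a_6)\mapsto(\infty,0,1,\lambda,\mu,\nu)$ the cross-ratio hypothesis becomes $\nu=\lambda(1-\mu)/(1-\lambda)$, and the involution of $\mathbf{P}^1$ that permutes the six branch points is $\rho_0(x)=\lambda(x-\mu)/(x-\lambda)$; it pairs them as $\{\infty,\lambda\},\{0,\mu\},\{1,\nu\}$, i.e.\ $\{a_1,a_4\},\{a_2,a_5\},\{a_3,a_6\}$, \emph{not} with $\{\mu,\nu\}$ as one of the transpositions. Moreover $\rho_0$ is not fixed-point-free: its two fixed points on $\mathbf{P}^1$ are $\lambda\pm(\lambda^2-\lambda\mu)^{1/2}$, and this is precisely where the square-root hypothesis enters---it makes those fixed points $k$-rational, which is what allows you to separate the two lifts $\rho$ and $\iota\rho$ over $k$ (so your explanation of the role of the hypothesis is correct, only the geometric picture leading to it needed adjusting). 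With these tweaks the remainder of your plan, including the bookkeeping you defer, goes through.
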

Afterward in this section, we assume that
\begin{align*}
(a_2-a_4)(a_1-a_6)(a_3-a_5)&=(a_2-a_6)(a_1-a_5)(a_3-a_4),\\ 
(a_2-a_4)(a_1-b_6)(a_3-b_5)&=(a_2-b_6)(a_1-b_5)(a_3-a_4),
\end{align*}
and that both $(a_1-a_2)(a_2-a_4)(a_4-a_5)(a_5-a_1)$ and $(a_1-a_2) (a_2-a_4)(a_4-b_5)(b_5-a_1)$ are square roots in $k^*$.

Also we set
\begin{align*}
a=\dfrac{(a_1-a_3)(a_2-a_4)}{(a_2-a_3)(a_1-a_4)},\quad
b=\dfrac{(a_1-a_3)(a_2-a_5)}{(a_2-a_3)(a_1-a_5)}, \quad
c=\dfrac{(a_1-a_3)(a_2-b_5)}{(a_2-a_3)(a_1-b_5)},
\end{align*}
$\beta_1=\alpha_1(a_2-a_3)(a_1-a_4)(a_1-a_5)(a_1-a_6)$
and $\beta_2=\alpha_2(a_2-a_3)(a_1-a_4)(a_1-b_5)(a_1-b_6)$.

Next let
\begin{align*}
\theta_1&=\theta_2=\dfrac{\beta_1 (1-b)}{1-a},\qquad 
\lambda_1, \ \lambda_2 =\dfrac{(1-a)\big(b-2a\pm 2(a^2-ab)^{1/2}\big)}{b-1},\\
\theta_3&=\theta_4=\dfrac{\beta_2 (1-c)}{1-a},\qquad
\lambda_3, \ \lambda_4 =\dfrac{(1-a)\big(c-2a\pm 2(a^2-ac)^{1/2}\big)}{c-1}, \\
\theta_5&=\alpha_1 \alpha_2(a_5-b_6)(a_6-b_5),\qquad
\lambda_5=\dfrac{(a_5-b_5)(a_6-b_6)}{(a_5-b_6)(a_6-b_5)}.
\end{align*}
\begin{theorem} \label{g5jac} 
With the assumptions and notation as above,
the Jacobian of the curve $C$ has the following isogeney relation over $k$\textup{:}
\begin{align*}
J(C) \sim E_1 \times E_2 \times E_3 \times E_4 \times E_5 
\end{align*}
with the five elliptic curves defined by
\begin{align*}
E_i\colon s^2=\theta_i t(t-1)(t-\lambda_i) \qquad \text{for}\  1 \le i \le 5. 
\end{align*}

In particular, if $k=\F_q$ then the number of rational points of $C$ over $\F_q$
\begin{align*}
\#C(\F_q) =\sum_{i=1}^5  \#E_i(\F_q)-4q-4.
\end{align*}
\end{theorem}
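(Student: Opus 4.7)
The argument chains the decompositions already proved in the paper. First, I would observe that Theorem~\ref{jacghc} extends verbatim to the twisted setting: the involutions $\sigma\colon(x,y_1,y_2)\mapsto(x,-y_1,y_2)$ and $\tau\colon(x,y_1,y_2)\mapsto(x,y_1,-y_2)$ remain automorphisms of $C$ regardless of the leading scalars $\alpha_1,\alpha_2$, and the Kani--Rosen calculation is insensitive to them. Hence over $k$ one still has
\[
J(C)\sim J(C_1)\times J(C_2)\times J(C_3),
\]
where $C_3\colon y_3^2=\alpha_1\alpha_2(x-a_5)(x-a_6)(x-b_5)(x-b_6)$ is an elliptic curve.

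Next, I would apply Theorem~\ref{JacHyper} to $C_1$ and to $C_2$ separately. For $C_1$, the syzygy required by Theorem~\ref{JacHyper} is precisely the first standing assumption, and the $(\lambda,\mu,\theta)$ produced by that theorem read off as $(a,b,\beta_1)$. The square-root hypothesis there, namely that $\lambda(\lambda-\mu)=a(a-b)$ be a square in $k^*$, is equivalent to the assumed existence of a square root of $(a_1-a_2)(a_2-a_4)(a_4-a_5)(a_5-a_1)$, because a direct manipulation yields
\[
a(a-b)=\left(\frac{a_1-a_3}{(a_2-a_3)(a_1-a_4)(a_1-a_5)}\right)^{2}(a_1-a_2)(a_2-a_4)(a_4-a_5)(a_5-a_1).
\]
Thus Theorem~\ref{JacHyper} yields $J(C_1)\sim E_1\times E_2$ with the stated $\theta_1,\theta_2$ and $\lambda_1,\lambda_2$. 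The identical argument applied to $C_2$, with $(a_5,a_6)$ replaced by $(b_5,b_6)$, the second syzygy, and the second square-root hypothesis, gives $J(C_2)\sim E_3\times E_4$.

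For the elliptic curve $C_3$, a M\"obius substitution sending three of the four branch points $a_5,a_6,b_5,b_6$ to $0,1,\infty$ puts the equation into Legendre form; a short cross-ratio calculation identifies the image of the fourth point as $\lambda_5=(a_5-b_5)(a_6-b_6)/[(a_5-b_6)(a_6-b_5)]$ and the coefficient produced by the substitution as $\theta_5=\alpha_1\alpha_2(a_5-b_6)(a_6-b_5)$, so $J(C_3)\sim E_5$. Combining the three isogenies yields the claimed five-fold decomposition. The point-count identity is then immediate by the argument of Corollary~\ref{nrp}: the Tate module of $J(C)$ splits as the direct sum of those of $E_1,\dots,E_5$, so the Frobenius traces add, giving $\#C(\F_q)=\sum_{i=1}^5\#E_i(\F_q)-4q-4$. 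I expect the square-root identification in the middle paragraph to be the only substantive obstacle; the M\"obius normalisation for $C_3$ and the Tate-module bookkeeping are then routine.
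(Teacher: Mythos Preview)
Your proposal is correct and follows essentially the same route as the paper's own proof: first the twisted analogue of Theorem~\ref{jacghc} gives $J(C)\sim J(C_1)\times J(C_2)\times J(C_3)$, then Theorem~\ref{JacHyper} is applied to $C_1$ and $C_2$ after verifying that $a(a-b)$ and $a(a-c)$ are squares in $k^*$ (via exactly the rational identity you wrote down), and finally $C_3$ is identified with $E_5$ and the point-count formula is deduced from the Tate-module decomposition. Your treatment is in fact a little more explicit than the paper's, which simply asserts that $C_3$ is birational to $E_5$ and records the formula for $a(a-b)$ without the squared-factor rewriting.
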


\begin{proof}
Because $(a_1-a_2)(a_2-a_4)(a_4-a_5)(a_5-a_1)$
and $(a_1-a_2) (a_2-a_5)(a_5-b_5)(b_5-a_1)$ are square roots in $k^*$,
so are the elements
$a(a-b)=\dfrac{(a_1-a_3)^2(a_2-a_4)(a_2-a_1)(a_4-a_5)}{(a_2-a_3)^2(a_1-a_4)^2(a_1-a_5)}$ and 
$a(a-c)=\dfrac{(a_1-a_3)^2(a_2-a_4)(a_2-a_1)(a_4-b_5)}{(a_2-a_3)^2(a_1-a_4)^2(a_1-b_5)}$.
Applying Theorem \ref{JacHyper}  to curves $C_1$ and $C_2$,
we obtain their Jacobian decompositions as 
$J(C_1) \sim E_1 \times E_2$ and $J(C_2) \sim E_3 \times E_4$ respectively.
Since $C_3$ is birational to $E_5$, we can prove it.

Next, let $k=\F_q$. It is well known that $\#C(\F_q)=q+1-t$, where $t$ is the trace of the Frobenius endomorphism
acting on a Tate module of $J(C)$. Since $J(C) \sim E_1 \times \cdots \times E_5$,
then the Tate module of $J(C)$ is isomorphic to the direct sum of the Tate modules of $E_1, \ldots, E_5$.
Hence $t=t_1+\cdots+t_5$, where $t_1, \ldots, t_5$ are the traces of
the Frobenius on the Tate modules of $E_1,\ldots,E_5$ respectively.
The result follows by recalling that $t_i=q+1-\#E_i(\F_q)$ for $1 \le i \le 5$.
\end{proof}

\begin{theorem}\label{g5serre} 
Suppose further that $\alpha_1, \alpha_2$, $a_1, \ldots, a_6, b_5, b_6 \in \F_p$
and that both $(a_1-a_2)(a_2-a_4)(a_4-a_5)(a_5-a_1)$
and $(a_1-a_2) (a_2-a_5)(a_5-b_5)(b_5-a_1)$ 
are square roots in $\F_p^*$.
\begin{enumerate}
\item[(i)]\label{p1}
Let $p \ge 17$.  The curve $C$ over $\F_p$ attains the Serre bound
if and only if
\begin{align*}
(-\theta_i)^m H_p(\lambda_i) &\equiv - \lfloor 2 \sqrt{p}\rfloor \mod p 
\qquad \text{for}\ 1\le i \le 5.
\end{align*}
\item[(ii)]
The curve $C$ over $\F_{p^2}$ is maximal if and only if
\begin{align*}
H_p(\lambda_i) \equiv 0 \mod p
\qquad \text{for}\ 1\le i \le 5.
\end{align*}
Further, if $C$ over $\F_{p^2}$ is maximal then 
\begin{align*}
p \equiv 3 \mod 4. 
\end{align*}
\item[(iii)]
Let $p \ge 11$.  Set $h_i$ as the integer such that
$h_i \equiv (-\theta_i)^m H_p(\lambda_i)$ mod $p$ and $0 \leqq h_i <p$.
The curve $C$ over $\F_{p^3}$ attains the Serre bound
if and only if
\begin{align*}
h_i^3-3p h_i=- \lfloor 2p \sqrt{p} \rfloor
\qquad \text{for} \ 1\le i \le 5.
\end{align*}
\item[(iv)]
The number of rational points of $C$ over $\F_q$
\begin{align*}
 \#C(\F_q) \equiv 0 \mod 4.
\end{align*}
\end{enumerate}
\end{theorem}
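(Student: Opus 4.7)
The plan is to reduce each of the four assertions directly to the elliptic-curve statements in Theorem \ref{elserre} and Lemma \ref{mod4} via the point-count identity
\[
\#C(\F_q) = \sum_{i=1}^{5} \#E_i(\F_q) - 4q - 4
\]
furnished by Theorem \ref{g5jac}. The crucial arithmetic observation is that the Serre bound for $C$ (a curve of genus $g=5$) is exactly the sum of the five elliptic Serre bounds minus $4q+4$: concretely,
\[
5\bigl(q+1+\lfloor 2\sqrt{q}\rfloor\bigr) - (4q+4) = q + 1 + 5\lfloor 2\sqrt{q}\rfloor.
\]
Therefore $C$ attains the Serre bound over $\F_q$ if and only if every $E_i$ attains it individually; the same telescoping will work for the Hasse--Weil bound used in part (ii).

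For part (i), I would take $q=p$. Since each $E_i$ is an elliptic curve one has $\#E_i(\F_p) \le p + 1 + \lfloor 2\sqrt{p}\rfloor$, so summing and subtracting $4p+4$ yields the Serre bound for $C$, with equality if and only if equality holds for each $E_i$. By Theorem \ref{elserre}(i) applied to $E_i = E_{\lambda_i}^{(\theta_i)}$ (which is permissible because $\theta_i,\lambda_i\in\F_p$ under our hypothesis, and $p\ge 17$), this happens precisely when $(-\theta_i)^m H_p(\lambda_i) \equiv -\lfloor 2\sqrt{p}\rfloor \pmod{p}$ for every $i$. I should double-check that $\theta_i\ne 0$ and $\lambda_i\notin\{0,1\}$ so the elliptic curves are genuinely nondegenerate; this follows from the distinctness and square-root assumptions placed on the $a_j,b_j$.

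For part (ii), run the same argument with $q=p^2$: the Hasse--Weil bound for each $E_i$ over $\F_{p^2}$ is $p^2+1+2p$, and $5(p^2+1+2p) - (4p^2+4) = p^2 + 1 + 10p$, which is the Hasse--Weil bound for a genus-$5$ curve over $\F_{p^2}$. Thus $C$ is maximal iff every $E_i$ is maximal, which by Theorem \ref{elserre}(ii) amounts to $H_p(\lambda_i)\equiv 0\pmod p$ for all $i$ (the factor $(-\theta_i)^m$ drops because $\theta_i\ne 0$). The supplementary claim $p\equiv 3\pmod 4$ also transfers from Theorem \ref{elserre}(ii) to $C$, since maximality of $C$ forces maximality of each $E_i$ (and the existence of at least one such $\lambda_i$ with $H_p(\lambda_i)\equiv 0$ already forces $p\equiv 3\pmod 4$). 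Part (iii) is identical in structure with $q=p^3$, invoking Theorem \ref{elserre}(iii). Finally, part (iv) follows because Lemma \ref{mod4} gives $\#E_i(\F_q)\equiv 0\pmod 4$ for each $i$, and $4q+4\equiv 0\pmod 4$, so the point-count identity yields $\#C(\F_q)\equiv 0\pmod 4$.

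There is essentially no serious obstacle, since the heavy lifting was done in Theorem \ref{g5jac} (the isogeny decomposition) and Theorem \ref{elserre} (the elliptic criteria); the only genuinely new ingredient is the clean numerical coincidence $5(q+1+\lfloor 2\sqrt q\rfloor)-(4q+4)=q+1+5\lfloor 2\sqrt q\rfloor$ (and its Hasse--Weil analogue), which forces simultaneous equality in all five elliptic curves. The one place to be careful is the ``only if'' direction in (i)--(iii): one needs that none of the $\#E_i(\F_q)$ can exceed its own Serre/Hasse--Weil bound, so that a deficit in any single $E_i$ cannot be compensated by a surplus elsewhere, which is exactly what the elliptic bounds guarantee.
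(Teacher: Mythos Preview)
Your proposal is correct and follows essentially the same route as the paper: invoke the point-count identity from Theorem~\ref{g5jac}, observe that $C$ attains the Serre (resp.\ Hasse--Weil) bound iff every $E_i$ does, and then apply Theorem~\ref{elserre} for (i)--(iii) and Lemma~\ref{mod4} for (iv). You are in fact more explicit than the paper in justifying the ``only if'' direction via the arithmetic identity $5(q+1+\lfloor 2\sqrt q\rfloor)-(4q+4)=q+1+5\lfloor 2\sqrt q\rfloor$ and the impossibility of compensating a deficit in one $E_i$ by a surplus in another.
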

\begin{proof}
From Theorem \ref{g5jac}, we have the isogeny relation 
$J(C) \sim E_1 \times \cdots \times E_5$ with 
$E_i\colon s^2=\theta_i t(t-1)(t-\lambda_i)$ 
and the number of rational points
$\#C(\F_q) =\sum_{i=1}^5  \#E_i(\F_q)-4q-4$.
Hence, a curve $C$ over $\F_q$ attains the Serre bound if and only if 
$E_i$ over $\F_q$ attains the Serre bound for all $1\le i \le5$. 
Combine it with Theorem \ref{elserre} (i), (ii) and (iii),
we are able to prove (i), (ii) and (iii) respectively.

Recalling Lemma \ref{mod4}, we obtain (iv) immediately.
\end{proof}

Table  \ref{tableg5p1}
lists explicit values $(p, \alpha_1, \alpha_2, a_1, \ldots, a_6,b_5,b_6)$ satisfying
if and only if conditions of Theorem \ref{g5serre} (i).
They are new curves of genus $5$ attaining the Serre bound over $\F_{p}$. 
Example~\ref{g5p499} explains the case of $p=499$.
The other cases in the table are similar to it.
\begin{table}[h] 
\caption{Curves of genus $5$ attaining the Serre bound over $\F_p$}
\label{tableg5p1}
\begin{tabular}{ccccccccccc}
\hline
$p$&$\alpha_1$&$\alpha_2$&$a_1$&$a_2$&$a_3$&$a_4$&$a_5$&$a_6$&$b_5$&$b_6$\\
\hline
499&47&436&2&1&10&55&92&84&36&275\\
599&501&399&3&2&24&276&97&32&94&55\\
1187&692&739&5&3&29&11&58&726&125&490\\
\hline
\end{tabular}
\end{table}

\begin{example}\label{g5p499} 
Two curves of genus $2$ have the following defining equations: 
\begin{align*}
C_1&\colon y_1^2=\ \, 47(x-2)(x-1)(x-10)(x-55)(x-92)(x-84),\\
C_2&\colon y_2^2=436(x-2)(x-1)(x-10)(x-55)(x-36)(x-275).
\end{align*}
The curve $C$ of genus $5$, which is defined by the fibre product of $C_1 \times_{{\bf P}^1} C_2$, 
attains the Serre bound over $\F_{499}$.
Note that since $47, 436 \in {\F_{499}^*}^2$ two curves
$C_1$ and $C_2$ are birational to 
$y_1^2=(x-2)(x-1)(x-10)(x-55)(x-92)(x-84)$ and
$y_2^2=(x-2)(x-1)(x-10)(x-55)(x-36)(x-275)$ respectively.

The Jacobian $J(C) \sim J(C_1) \times J(C_2) \times J(C_3)$ with
$C_3\colon y_3^2=47\cdot436(x-92)(x-84)(x-36)(x-275)$, which is birational to $y_3^2=(x-92)(x-84)(x-36)(x-275)$.
Applying Theorem \ref{g5jac} to the curve $C$, 
its Jacobian completely decomposes as
$J(C) \sim E_1 \times \cdots \times E_5$
where the five elliptic curves are defined by
$E_1\colon s^2=31t(t-1)(t-438)$, 
$E_2\colon s^2=31t(t-1)(t-198)$, 
$E_3\colon s^2=95t(t-1)(t-62)$, 
$E_4\colon s^2=95t(t-1)(t-302)$, 
$E_5\colon s^2=342t(t-1)(t-198)$. 
Here $31,\, 342 \in {\F_{499}^*}^2$, $95 \notin {\F_{499}^*}^2$.
\end{example}

Table  \ref{tableg5p2}
lists explicit values $(p, \alpha_1, \alpha_2, a_1, \ldots, a_6,b_5,b_6)$ satisfying
if and only if conditions of Theorem \ref{g5serre} (ii).
They are new maximal curves of genus $5$ over $\F_{p^2}$.
Even we set $\alpha_1=\alpha_2=1$ in this table,
they are still maximal curves over $\F_{p^2}$, 
because $\alpha_1, \alpha_2 \in \F_p$ are square roots in $\F_{p^2}^*$.
We explain the case of $p=11$ in Example \ref{g5p11},
where the same approach can be used to the other cases in the table.
\begin{table}[h] 
\caption{Maximal curves of genus $5$ over $\F_{p^2}$}
\label{tableg5p2}
\begin{tabular}{ccccccccccc}
\hline
$p$&$\alpha_1$&$\alpha_2$&$a_1$&$a_2$&$a_3$&$a_4$&$a_5$&$a_6$&$b_5$&$b_6$\\
\hline
11&4&6&5&3&10&7&6&8&9&2\\
23&16&8&5&3&9&7&11&13&22&1\\
31&10&7&6&7&11&15&14&10&2&19\\
43&38&24&20&19&15&40&42&22&8&29\\
47&31&26&6&13&7&4&18&2&45&8\\
59&5&51&4&8&2&33&54&17&21&40\\
71&36&18&4&9&3&23&41&45&61&69\\
79&11&9&11&36&14&66&49&35&27&72\\
83&4&37&2&3&1&48&54&80&7&19\\
103&17&25&2&3&1&58&61&75&85&14\\
107&83&104&7&5&2&29&56&16&101&47\\
127&68&87&6&5&3&28&38&39&99&48\\
131&59&55&6&5&17&2&79&61&34&89\\
139&107&118&6&7&4&68&35&88&50&93\\
151&45&62&11&9&7&150&70&38&37&114\\
167&72&166&10&3&5&147&142&13&38&144\\
179&167&128&12&9&15&97&175&11&52&139\\
191&115&150&13&90&76&1&46&128&88&79\\
199&32&125&113&20&103&194&4&33&70&59\\
\hline
\end{tabular}
\end{table}

\begin{example}\label{g5p11}
The curve $C$ of genus $5$, which is defined by the fibre product $C_1 \times_{{\bf P}^1} C_2$ with
\begin{align*}
C_1& \colon y_1^2=4(x-5)(x-3)(x-10)(x-7)(x-6)(x-8),\\
C_2& \colon y_2^2=6(x-5)(x-3)(x-10)(x-7)(x-9)(x-2)
\end{align*}
is maximal over $\F_{11^2}$.
Here $4 \in {\F_{11}^*}^2$, $6 \notin {\F_{11}^*}^2$.
Hence in particular a curve $C_1$ is birational to $y_1^2=(x-5)(x-3)(x-10)(x-7)(x-6)(x-8)$.

The Jacobian $ J(C) \sim J(C_1) \times J(C_2) \times J(C_3)$ with
$C_3 \colon y_3^2= 4\cdot6 (x-6)(x-8)(x-9)(x-2).$
Furthermore, Theorem \ref{g5jac} gives us the complete decomposition of the Jacobian:
$ J(C) \sim E_1 \times \cdots \times E_5$
with
$E_1\colon s^2=8t(t-1)(t-6)$, 
$E_2\colon s^2=8t(t-1)(t-2)$, 
$E_3\colon s^2=8t(t-1)(t-2)$,
$E_4\colon s^2=8t(t-1)(t-10)$, 
$E_5\colon s^2=3t(t-1)(t-10)$. 
Note $8 \notin {\F_{11}^*}^2$, $3 \in {\F_{11}^*}^2$.
\end{example}

Table  \ref{tableg5p3}
lists explicit values $(p, \alpha_1, \alpha_2, a_1, \ldots, a_6,b_5,b_6)$ satisfying
if and only if conditions of Theorem \ref{g5serre} (iii).
They are new curves of genus $5$ attaining the Serre bound over $\F_{p^3}$. 
We explain the case of $p=37$ in Example~\ref{g5p37},
where the other cases are similar to it.
\begin{table}[h] 
\caption{Curves of genus $5$ attaining the Serre bound over $\F_{p^3}$}
\label{tableg5p3}
\begin{tabular}{ccccccccccc}
\hline
$p$&$\alpha_1$&$\alpha_2$&$a_1$&$a_2$&$a_3$&$a_4$&$a_5$&$a_6$&$b_5$&$b_6$\\
\hline
37&17&6&0&1&3&31&34&13&29&30\\
97&81&91&2&3&1&85&11&69&76&8\\
193&79&22&1&2&4&177&127&66&52&156\\
\hline
\end{tabular}
\end{table}

\begin{example} \label{g5p37}
The curve $C$ of genus $5$,
which is defined by the fibre product $C_1 \times_{{\bf P}^1} C_2$ with
\begin{align*}
C_1& \colon y_1^2=17x(x-1)(x-3)(x-31)(x-34)(x-13),\\
C_2& \colon y_2^2=6x(x-1)(x-3)(x-31)(x-29)(x-30)
\end{align*}
attains the Serre bound over $\F_{37^3}$. Its Jacobian
$ J(C) \sim J(C_1) \times J(C_2) \times J(C_3)$ with
$C_3 \colon y_3^2=17 \cdot 6(x-34)(x-13)(x-29)(x-30)$.
Here $17, 6 \notin {\F_{37}^*}^2$, $17\cdot 6\in {\F_{37}^*}^2$.
Moreover, the Jacobian $J(C) \sim E_1 \times \cdots \times E_5$
with
$E_1\colon s^2=26t(t-1)(t-26)$,
$E_2\colon s^2=26t(t-1)(t-4)$, 
$E_3\colon s^2=4t(t-1)(t-12)$,
$E_4\colon s^2=4t(t-1)(t-34)$, 
$E_5\colon s^2=30t(t-1)(t-10)$.
Here $26, 4, 30 \in {\F_{37}^*}^2$.
\end{example}

\end{document}